\newtheorem{theorem}{Theorem}[section]
\newtheorem{lemma}[theorem]{Lemma}
\newtheorem{corollary}[theorem]{Corollary}
\theoremstyle{definition}
\newtheorem{example}[theorem]{Example}
\newtheorem{remark}[theorem]{Remark}
\definecolor{red}{RGB}{255,0,0}
\definecolor{green}{RGB}{0,255,0}
\definecolor{blue}{RGB}{0,0,255}
\definecolor{yellow}{RGB}{255,255,0}
\definecolor{cyan}{RGB}{0,255,255}
\definecolor{magenta}{RGB}{255,0,255}
\definecolor{orange}{RGB}{255,128,0}
\begin{document}

\title{Monomial ideals with arbitrarily high tiny powers}

\author{Oleksandra Gasanova}

%% then use \addressmark to match authors to institutions here

\maketitle
\begin{abstract}
Powers of (monomial) ideals is a subject that still calls attraction in various ways. Let $I\subset \mathbb K[x_1,\ldots,x_n]$   be a monomial ideal and let $G(I)$ denote the (unique) minimal monomial generating set of $I$. How small can $|G(I^i)|$ be in terms of $|G(I)|$? We expect that the inequality $|G(I^2)|>|G(I)|$ should hold and that $|G(I^i)|$, $i\ge 2$, grows further whenever $|G(I)|\ge 2$. In this paper we will disprove this expectation and show that for any $n$ and $d$ there is an $\mathfrak m$-primary monomial ideal $I\subset \mathbb K[x_1,\ldots,x_n]$ such that $|G(I)|>|G(I^i)|$ for all $i\le d$. 
\end{abstract}

\section{Introduction}
Let $I\subset \mathbb K[x_1,\ldots,x_n]$  be a monomial ideal and let $G(I)$ denote its minimal monomial generating set. It is known (see for example \cite{monid}) that the function $f(i) = |G(I^i)|$, for large $i$, is a polynomial in $i$ of degree $l(I)-1$ with a positive leading coefficient. Here $l(I)$ denotes the analytic spread of $I$, that is, the Krull dimension of the fiber ring $F(I)$ of $I$. In particular, for all $i$ large enough we have $|G(I^{i+1})|>|G(I^i)|$ unless $I$ is a principal ideal.

But what kind of pathologies can occur if $i$ is small? How small can $|G(I^i)|$ be in terms of $|G(I)|$? This question has been explored in \cite{TS} and \cite{NG}. We intuitively expect that the inequality $|G(I^2)|>|G(I)|$ should hold and that $|G(I^i)|$, $i\ge 2$, grows further whenever $|G(I)|\ge 2$. This expectation has been disproven in \cite{TS}: the authors construct a family of ideals in $\mathbb K [x,y]$ for which $|G(I)|>|G(I^2)|$. 

In Section~2 we will generalize the above result and show that for any $n\ge 2$ and $d\ge 2$ there is an $\mathfrak m$-primary ideal $I\subset \mathbb K[x_1,\ldots,x_n]$ such that $|G(I)|>|G(I^i)|$ for all $i\le d$. This section contains an explicit construction and several examples.

In Section~3 we will discuss Theorem~3.1 of \cite{TS}. This theorem says that if a monomial ideal $I=\langle u_1,\ldots,u_m\rangle\subset\mathbb K[x,y]$ satisfies certain conditions, then $|G(I^2)|=9$. We will relax the conditions of this theorem and give a more intuitive proof.
\section{Ideals with arbitrarily high tiny powers in any number of variables}
Let $n$ and $d$ be positive integers with $n,d \ge 2$. We will construct an $\mathfrak m$-primary monomial ideal $I\subset \mathbb{K}[x_1,\ldots,x_n]$ such that $|G(I)|>|G(I^2)|,|G(I)|>|G(I^3)|,\ldots, |G(I)|> |G(I^d)|$.
We will briefly describe the idea, after which we will give all the necessary proofs.

Let $\mu:=x_1^t\cdots x_n^t$, where $t$ is yet to be determined. We start with the ideal $J=\langle x_1^{4t},x_2^{4t},\ldots, x_n^{4t},\allowbreak x_1^{2t}\mu, x_2^{2t}\mu,\ldots,x_n^{2t}\mu\rangle$. Note that the number of generators of $J^i$ only depends on $i$ and $n$ and not on $t$. Let $A(n,d):=\max\limits_{i\in\{1,\ldots,d\}}|G(J^i)|$. For fixed $n$ and $d$ it is a constant which can be found using computer algebra (for simplicity, one may set $t=1$ here).

So far we only have $2n$ generators in $J$. Our goal is to find at least $A(n,d)-2n+1$ monomials $q_1,\ldots, q_s$ such that the set $G(J)\cup\{q_1,\ldots,q_s \}$ contains no monomials dividing each other and  $(J+\langle q_1,\ldots, q_s\rangle)^i=J^i$ for any $i\ge 2$. In other words, we would like to add generators to $J$ without changing any higher powers of $J$. The resulting ideal will be denoted by $I$, and $J$ will be called its skeleton. Clearly, the more monomials we can add the better it is and this is where $t$ comes into play.
\begin{lemma}
Let $J=\langle x_1^{4t},x_2^{4t},\ldots, x_n^{4t}, x_1^{2t}\mu, x_2^{2t}\mu,\ldots,x_n^{2t}\mu\rangle$ with $\mu=x_1^t\cdots x_n^t$, as above. Let $q:=\mu^2=x_1^{2t}x_2^{2t}\cdots x_n^{2t}$ and let $Q:=\langle q \rangle$. Then $JQ\subseteq J^2$ and $Q^2\subseteq J^2$.
\end{lemma}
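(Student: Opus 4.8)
The plan is to prove each inclusion by pointing, for every relevant monomial generator, to an explicit product of two generators of $J$ that divides it. Write $a_i := x_i^{4t}$ and $b_i := x_i^{2t}\mu = x_i^{3t}\prod_{j\ne i}x_j^t$, so that $G(J)=\{a_1,\dots,a_n,b_1,\dots,b_n\}$. The single observation that makes everything go is that $q=\mu^2$, and hence $b_i^2 = x_i^{4t}\mu^2 = a_i\,q$.

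For $JQ\subseteq J^2$ it is enough to check that $gq\in J^2$ for each $g\in G(J)$. If $g=a_i$, then $gq = x_i^{4t}\mu^2 = (x_i^{2t}\mu)^2 = b_i^2\in J^2$. If $g=b_i$, then $gq = x_i^{2t}\mu^3$; here I would fix any index $l\ne i$ (possible since $n\ge 2$) and verify that $a_i b_l = x_i^{4t}x_l^{2t}\mu$ divides $gq$, the quotient being $\mu^2/(x_i^{2t}x_l^{2t}) = \prod_{j\ne i,l}x_j^{2t}$, a bona fide monomial; hence $gq\in J^2$. For $Q^2\subseteq J^2$ one only needs $q^2=\mu^4\in J^2$, and indeed $b_1 b_2 = x_1^{2t}x_2^{2t}\mu^2$ divides $\mu^4$ with quotient $\prod_{j\ge 3}x_j^{2t}$ (read as $1$ when $n=2$), so $q^2 = b_1 b_2\prod_{j\ge 3}x_j^{2t}\in J^2$.

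All of this is bookkeeping with exponents once the correct products of generators are named. The only step that asks for a second's thought is the case $g=b_i$: a crude total-degree count shows $\deg(b_i q) = (3n+2)t$ exceeds the degree of every product of two generators of $J$ when $n>2$, which might look alarming, but membership in $J^2$ requires only divisibility by some product of two generators times an arbitrary monomial, and the choice $a_i b_l$ supplies exactly that. I do not expect any genuine obstacle; the hypothesis $n\ge 2$ enters precisely to guarantee the existence of the auxiliary index $l$ (equivalently, that $\mu^2/(x_i^{2t}x_l^{2t})$ is an honest monomial).
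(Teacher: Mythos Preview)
Your proof is correct and follows essentially the same approach as the paper: exhibit, for each relevant generator, an explicit product of two generators of $J$ that divides it. The only cosmetic difference is in the $Q^2\subseteq J^2$ step, where you use $b_1b_2=x_1^{2t}x_2^{2t}\mu^2$ as the divisor of $q^2$ while the paper uses $a_1a_2=x_1^{4t}x_2^{4t}$; both choices work equally well.
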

\begin{proof}
In order to prove that $JQ\subseteq J^2$, it is enough to show that $x_1^{4t}q\in J^2$ and $x_1^{2t}\mu q\in J^2$. Indeed, $x_1^{4t}q=x_1^{4t}\mu^2=(x_1^{2t}\mu)^2$ and $x_1^{2t}\mu q=x_1^{5t}x_2^{3t}\cdots x_n^{3t}$ is divisible by $x_1^{4t} x_2^{2t}\mu=x_1^{5t}x_2^{3t}x_3^{t}\cdots x_n^{t}$. 

In order to show that $Q^2\subseteq J^2$, it is enough to show that $q^2\in J^2$, which is true since $q^2=x_1^{4t}\cdots x_n^{4t}$ is divisible by $x_1^{4t}x_2^{4t}$. 
\end{proof}
\begin{corollary}
Let $J$ and $Q$ be as above and let $Q'\subseteq Q$. Then for any $i\ge 2$ we have $(J+Q')^i=J^i$.
\end{corollary}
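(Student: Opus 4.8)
The plan is to prove the statement by induction on $i$, using the binomial-type expansion of $(J+Q')^i$ together with Lemma~2.1. First I would observe that $Q' \subseteq Q$ means every generator of $Q'$ is a multiple of $q$, so $Q' \subseteq Q$ as ideals and in particular $(Q')^k \subseteq Q^k = \langle q^k \rangle$ for all $k \ge 1$. Since $J \subseteq (J+Q')$ trivially gives $J^i \subseteq (J+Q')^i$, the whole content is the reverse inclusion $(J+Q')^i \subseteq J^i$ for $i \ge 2$.

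For the reverse inclusion, I would expand
\[
(J+Q')^i = \sum_{k=0}^{i} J^{\,i-k}(Q')^k,
\]
so it suffices to show $J^{\,i-k}(Q')^k \subseteq J^i$ for every $k$ with $1 \le k \le i$ (the $k=0$ term is $J^i$ itself). Because $(Q')^k \subseteq Q^k$, it is enough to prove $J^{\,i-k}Q^k \subseteq J^i$ for all $i \ge 2$ and $1 \le k \le i$. I would handle this by a downward/iterative use of the two containments from Lemma~2.1: the relation $JQ \subseteq J^2$ lets one trade one factor of $Q$ for one factor of $J$ whenever at least one factor of $J$ is present, and $Q^2 \subseteq J^2$ handles the situation where no $J$ factor is available but at least two $Q$ factors are. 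Concretely, for $J^{\,i-k}Q^k$ with $i-k \ge 1$: apply $JQ \subseteq J^2$ to one $J$–$Q$ pair to get $J^{\,i-k}Q^k \subseteq J^{\,i-k-1}\cdot J^2 \cdot Q^{k-1} = J^{\,i-k+1}Q^{k-1}$, and repeat until the $Q$-exponent drops to $0$, landing in $J^i$. For the remaining case $k = i$ (so $i - k = 0$), we have $i \ge 2$, hence $Q^i = Q^{i-2}Q^2 \subseteq Q^{i-2}J^2$, and now there is a $J$ factor present, so the previous argument applies and gives $Q^i \subseteq J^i$.

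Assembling these, $J^{\,i-k}(Q')^k \subseteq J^{\,i-k}Q^k \subseteq J^i$ for all $1 \le k \le i$ once $i \ge 2$, so $(J+Q')^i = \sum_{k=0}^i J^{\,i-k}(Q')^k \subseteq J^i$, and combined with the trivial reverse inclusion we get equality. I do not anticipate a serious obstacle here; the only point requiring a little care is the bookkeeping on exponents to make sure that whenever we invoke $JQ \subseteq J^2$ there really is a $J$ factor to consume, and that the base case $k=i$ genuinely needs the hypothesis $i \ge 2$ (which it does — note $(J+Q')^1 = J + Q'$ need not equal $J$, consistent with the statement being asserted only for $i \ge 2$).
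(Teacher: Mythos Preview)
Your proof is correct and follows essentially the same approach as the paper: expand $(J+Q')^i$ binomially and use the two containments $JQ\subseteq J^2$ and $Q^2\subseteq J^2$ from Lemma~2.1 to absorb each summand $J^{i-k}Q^k$ into $J^i$. The only cosmetic difference is bookkeeping: the paper splits by the parity of $k$ and primarily uses $Q^2\subseteq J^2$ (invoking $JQ\subseteq J^2$ once for odd $k$), whereas you iterate $JQ\subseteq J^2$ and invoke $Q^2\subseteq J^2$ once for the extremal case $k=i$.
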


\begin{proof}
$(J+Q')^i\subseteq (J+Q)^i=J^i+J^{i-1}Q+\ldots+JQ^{i-1}+Q^i$. 

If $j$ is even, then $J^{i-j}Q^j=J^{i-j}(Q^2)^{\frac{j}{2}}\subseteq J^{i-j}(J^2)^{\frac{j}{2}}=J^i$.

If $j$ is odd, then $J^{i-j}Q^j=J^{i-j}(Q^2)^{\frac{j-1}{2}}Q\subseteq J^{i-j}(J^2)^{\frac{j-1}{2}}Q=J^{i-1}Q=J^{i-2}(JQ)\subseteq J^{i-2}J^2=J^i$.

Therefore, $(J+Q')^i\subseteq J^i$ for any $i\ge 2$. The other inclusion is trivial, which finishes the proof.
\end{proof}
Now we know that monomials from $Q$ are those that could potentially add more generators to $J$, but do not change the higher powers of $J$. We want to choose some monomials $q_1,\ldots,q_s\in Q$ such that the set $G(J)\cup\{q_1,\ldots,q_s \}$ contains no monomials dividing each other. In other words, we need to find monomials $q_1,\ldots, q_s\in Q$ that satisfy the following three conditions:
\begin{enumerate}
\item no monomial from $\{q_1,\ldots,q_s\}$ is divisible by any monomial in $G(J)$;
\item no monomial from $G(J)$ is divisible by any monomial in $\{q_1,\ldots, q_s\}$;
\item monomials in $\{q_1,\ldots,q_s\}$ do not divide each other.
\end{enumerate}
An obvious way to have the first condition satisfied is to consider only monomials from $Q\backslash J$. This set has a nice description.
\begin{lemma}
Let $J$ and $Q$ be as above. Then 
$$Q\backslash J=\{x_1^{\alpha_1}\cdots x_n^{\alpha_n}: (\alpha_1,\ldots,\alpha_n)\in [2t,3t-1]^n\cap \mathbb{N}^n\}$$
\end{lemma}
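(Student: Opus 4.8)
The plan is to unwind both sides as explicit sets of exponent vectors and compare. Since $Q=\langle q\rangle$ with $q=x_1^{2t}\cdots x_n^{2t}$, a monomial $x_1^{\alpha_1}\cdots x_n^{\alpha_n}$ lies in $Q$ precisely when $q$ divides it, i.e.\ when $\alpha_i\ge 2t$ for every $i$. So the whole task reduces to determining, among these monomials, exactly which ones fail to lie in $J$.

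I would then recall the basic fact that a monomial belongs to the monomial ideal $J$ if and only if it is divisible by one of the generators $x_1^{4t},\ldots,x_n^{4t},x_1^{2t}\mu,\ldots,x_n^{2t}\mu$, and rewrite $x_j^{2t}\mu=x_j^{3t}\prod_{k\ne j}x_k^{t}$. The key observation is that for a monomial $x_1^{\alpha_1}\cdots x_n^{\alpha_n}\in Q$ we already have $\alpha_k\ge 2t\ge t$ for all $k$, so the condition $x_j^{2t}\mu \mid x_1^{\alpha_1}\cdots x_n^{\alpha_n}$ collapses to the single inequality $\alpha_j\ge 3t$; and this inequality is in turn implied by $\alpha_j\ge 4t$, the condition coming from $x_j^{4t}$. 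Hence, for $x_1^{\alpha_1}\cdots x_n^{\alpha_n}\in Q$, we get $x_1^{\alpha_1}\cdots x_n^{\alpha_n}\in J$ if and only if $\alpha_j\ge 3t$ for some $j\in\{1,\ldots,n\}$.

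Negating this last criterion, $x_1^{\alpha_1}\cdots x_n^{\alpha_n}\in Q\setminus J$ if and only if $\alpha_j\ge 2t$ for all $j$ and $\alpha_j\le 3t-1$ for all $j$, i.e.\ $(\alpha_1,\ldots,\alpha_n)\in[2t,3t-1]^n\cap\mathbb N^n$, which is exactly the claimed description. The argument is essentially bookkeeping with divisibility; the only point that requires any care is the reduction in the middle paragraph, namely noticing that the ``small exponent'' requirements $\alpha_k\ge t$ attached to the generators $x_j^{2t}\mu$ are automatically met on $Q$, so that membership in $J$ simplifies to the clean condition ``some exponent is at least $3t$''. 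I do not expect a genuine obstacle here.
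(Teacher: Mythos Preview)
Your argument is correct and is essentially the same as the paper's: both identify $Q$ with $\{\alpha_i\ge 2t\ \forall i\}$ and then observe that, on $Q$, divisibility by $x_j^{2t}\mu$ reduces to the single condition $\alpha_j\ge 3t$ (which also subsumes the $x_j^{4t}$ generators), so that $Q\setminus J$ is cut out by $\alpha_j\le 3t-1$ for all $j$. The only cosmetic difference is that the paper splits this into two inclusions while you package it as a single biconditional.
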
 
\begin{proof}
$\supseteq$: All monomials within the given hypercube are divisible by $q$ and none of them is divisible by any of the minimal generators of $J$ since every minimal generator of $J$ has an exponent greater than or equal to $3t$. This is the only inclusion we will use in the future construction, but we can show the other inclusion as well.

$\subseteq$: Let $x_1^{\alpha_1}\cdots x_n^{\alpha_n}\in Q\backslash J$. Then $\alpha_i\ge 2t$ for all $i$. But $x_1^{\alpha_1}\cdots x_n^{\alpha_n}\not\in J$, that is, in particular, it is not divisible by $x_1^{2t}\mu=x_1^{3t}x_2^t\cdots x_n^t$. This implies $\alpha_1\le 3t-1$. Analogously, $\alpha_i\le 3t-1$ for all $i$.
\end{proof}
\begin{figure}[H]
\centering
\begin{tikzpicture}[scale=0.4]
\draw[step=1cm,gray,opacity=0.3,line width=0.001 mm] (0,0) grid (20,20);

\draw[step=5cm,opacity=0.3] (0,0) grid (20,20);

\draw[violet, thick] (0,20)--(5,20)--(5,15)--(15,15)--(15,5)--(20,5)--(20,0);

\draw[red, thick](10,20)--(10,10)--(20,10);
\draw[red, thick, dashed] (10,14)--(14,14)--(14,10);
\foreach \x in {10,...,14}{
\foreach \y in {10,...,14}{
\fill[red] (\x,\y) circle (0.1 cm);

}
}
    \node [below, very thin] at (0,0) {$_0 $};
    \node [left, very thin] at (0,0) {$_0 $};
    \node [below, very thin] at (5,0) {$_t $};
    \node [left, very thin] at (0,5) {$_t $};
     \node [below, very thin] at (10,0) {$_{2t} $};
    \node [left, very thin] at (0,10) {$_{2t} $};
    \node [below, very thin] at (15,0) {$_{3t} $};
    \node [left, very thin] at (0,15) {$_{3t} $};
    \node [below, very thin] at (20,0) {$_{4t} $};
    \node [left, very thin] at (0,20) {$_{4t} $};
\end{tikzpicture}

\caption{monomials in $\textcolor{red}Q\backslash \textcolor{violet}J$, $n=2$.} 
\end{figure}

Now we know that any subset of monomials from $[2t,3t-1]^n$ satisfies the first condition. It is also quite obvious that any subset of monomials from $[2t,3t-1]^n$ satisfies the second condition. The only thing to be taken care of is that the chosen monomials from $[2t,3t-1]^n$ do not divide each other. The most natural way to do so is to choose monomials of the same degree. To get as many of them as possible, we should choose monomials on a central integer cross-section of this hypercube, that is, monomials of the form 
$$\left\{x_1^{\alpha_1}\cdots x_n^{\alpha_n}: (\alpha_1,\ldots,\alpha_n)\in [2t,3t-1]^n\cap \mathbb{N}^n, \alpha_1+\ldots+\alpha_n=\left\lfloor\frac{n(5t-1)}{2}\right\rfloor\right\}.$$
Note that if $n(t-1)$ is even, we have a unique central integer cross-section, otherwise there are two of them giving the same number of integer points; the other central integer cross-section can be obtained by replacing $\lfloor.\rfloor$ by $\lceil.\rceil$ in the expression above.
The number of integer points on every central integer cross-section of $[2t,3t-1]^n$ equals the number of integer points on every central integer cross-section of $[0,t-1]^n$ and equals the central (and largest) coefficient(s) in the expansion of $(1+x+\ldots+x^{t-1})^n$.
For a fixed $n$ the number of these monomials only depends on $t$ and can be made arbitrarily large for $t$ large enough. 

%This number seems hard to compute and if one wants the precise number, one can choose a different hyperplane (which would result into requiring a bigger $t$). For instance, choosing the hyperplane at $2tn+t-1$ gives
%\begin{multline*}
%|\{(\alpha_1,\ldots,\alpha_n)\in [2t,3t-1]^n: \alpha_i\in \mathbb{N} \text{ and } \alpha_1+\ldots+\alpha_n=2tn+t-1\}|=\\=|(\beta_1,\ldots,\beta_n)\in [0,t-1]^n:\beta_i\in \mathbb{N} \text{ and } \beta_1+\ldots+\beta_n=t-1|=\\=|(\beta_1,\ldots,\beta_n): \beta_i\in \mathbb{N} \text{ and } \beta_1+\ldots+\beta_n=t-1|={t+n-2\choose n-1}.
%\end{multline*} 
To summarize all of the above, we need to perform the following steps:
\begin{enumerate}
\item Fix $n$ and $d$.
\item Let $J=\langle x_1^{4t},x_2^{4t},\ldots, x_n^{4t}, x_1^{2t}\mu, x_2^{2t}\mu,\ldots,x_n^{2t}\mu\rangle$ with $\mu=x_1^t\cdots x_n^t$, as above. Compute the number of generators in $J,J^2,J^3,\ldots, J^d$ using computer algebra. These numbers are independent of $t$, so we may set $t=1$ in this step. Take the maximum of these numbers and call it $A(n,d)$.
\item We already have $2n$ generators in $J$; we would like to have at least $A(n,d)+1$, that is, we need at least $A(n,d)-2n+1$ more. There exists $t$ such that the number of integer points on each central integer cross-section of $[2t,3t-1]^n$ is greater or equal to $A(n,d)-2n+1$. Choose any such $t$ and add all the appropriate monomials to our skeleton $J$. This is our ideal $I$.
\end{enumerate}
Let us discuss a few examples demonstrating the algorithm above.
\begin{example}
\label{ex}
Let us first consider an easy case with a small number of variables.
\begin{enumerate}
\item We will fix $n=2$ and $d=6$.
\item Let $J=\langle x^{4t},y^{4t},x^{3t}y^t,x^ty^{3t}\rangle$. In this step we may set $t=1$. Computing the powers of this ideal up to the sixth, we obtain: $|G(J^2)|=9$, $|G(J^3)|=13$, $|G(J^4)|=17$, $|G(J^5)|=21$, $|G(J^6)|=25$. Thus $A(2,6)=25$.
\item We have $4$ generators, but we would like to have at least $26$. That is, we need to add at least $22$ more. A square of the form $[2t,3t-1]^2$ has $t$ integer points on the diagonal, thus we can choose $t=22$. Therefore, our skeleton is $J=\langle x^{88},y^{88},x^{66}y^{22},x^{22}y^{66}\rangle$.  The monomials we want to add are on the diagonal of $[2t,3t-1]^2=[44,65]^2$, that is, $x^{65}y^{44}, x^{64}y^{45}, \ldots, x^{45}y^{64}, x^{44}y^{65}$.

Therefore,
\begin{multline*}
 I=\langle x^{88},x^{66}y^{22},x^{65}y^{44},x^{64}y^{45},x^{63}y^{46},x^{62}y^{47},x^{61}y^{48},x^{60}y^{49},x^{59}y^{50},\\x^{58}y^{51},x^{57}y^{52},x^{56}y^{53},x^{55}y^{54},x^{54}y^{55},x^{53}y^{56},x^{52}y^{57},x^{51}y^{58},\\x^{50}y^{59},x^{49}y^{60},x^{48}y^{61},x^{47}y^{62},x^{46}y^{63},x^{45}y^{64},x^{44}y^{65},x^{22}y^{66},y^{88}\rangle.
\end{multline*} 
\end{enumerate}
We see that $|G(I)|=26$, $|G(I^2)|=9$, $|G(I^3)|=13$, $|G(I^4)|=17$, $|G(I^5)|=21$, $|G(I^6)|=25$, as desired.
\end{example}
\begin{example}
Here is another example.
\begin{enumerate}
\item Fix $n=3$ and $d=3$. 
\item Let $J=\langle x^{4t},y^{4t},z^{4t},x^{3t}y^tz^t,x^ty^{3t}z^t,x^ty^tz^{3t}\rangle$. In this step we may set $t=1$. Computing the powers of this ideal up to the third, we obtain: $|G(J^2)|=18$, $|G(J^3)|=34$. Thus $A(3,3)=34$.
\item We have $6$ generators, but we would like to have at least $35$. That is, we need to add at least $29$ more. 
%It can be shown that the cube $[0,t-1]^3$ (up to a shift, this is the cube we need) has $\lceil\frac{3t^2}{4}\rceil$ integer points on each of its central cross-sections. A more obvious way to find this number is to note that it equals the central (and largest) coefficient(s) in the expansion of $(1+x+\ldots+x^{t-1})^3$. 
As we already know, the number of integer points on every central integer cross-section of $[2t,3t-1]^3$ equals the number of integer points on every central integer cross-section of $[0,t-1]^3$ and equals the central (and largest) coefficient(s) in the expansion of $(1+x+\ldots+x^{t-1})^3$. An explicit computation shows that $t=7$ is the smallest suitable integer and we can add $37$ monomials. Thus our skeleton is $J=\langle x^{28},y^{28},z^{28},x^{21}y^7z^7,x^7y^{21}z^7,x^7y^7z^{21}\rangle$. The monomials we want to add are those on the (unique) central integer cross-section of $[2t,3t-1]^3=[14,20]^3$. Up to a shift, it is the same as the central integer cross-section of $[0,6]^3$. The points we are looking for satisfy $\alpha_1+\alpha_2+\alpha_3=9$, $\alpha_i\le 6$. All the possible triples are: $(6,3,0)$ - 6 points considering all the permutations, $(6,2,1)$ - 6 points, $(5,4,0)$ - 6 points, $(5,3,1)$ - 6 points, $(5,2,2)$ - 3 points, $(4,4,1)$ - 3 points, $(4,3,2)$ - 6 points, $(3,3,3)$ - 1 point. This gives us $37$ extra monomials, as desired. Shifting them back and adding to our ideal (they are written in the same order as above) gives us 
\begin{multline*}
I=\langle x^{28},y^{28},z^{28},x^{21}y^7z^7,x^7y^{21}z^7, x^7y^7z^{21},\\x^{20}y^{17}z^{14},x^{20}y^{14}z^{17},x^{17}y^{20}z^{14},x^{17}y^{14}z^{20},x^{14}y^{20}z^{17},x^{14}y^{17}z^{20},\\ x^{20}y^{16}z^{15},x^{20}y^{15}z^{16},x^{16}y^{20}z^{15},x^{16}y^{15}z^{20},x^{15}y^{20}z^{16},x^{15}y^{16}z^{20},\\ x^{19}y^{18}z^{14}, x^{19}y^{14}z^{18},x^{18}y^{19}z^{14},x^{18}y^{14}z^{19},x^{14}y^{19}z^{18},x^{14}y^{18}z^{19},\\x^{19}y^{17}z^{15},x^{19}y^{15}z^{17},x^{17}y^{19}z^{15},x^{17}y^{15}z^{19},x^{15}y^{19}z^{17},x^{15}y^{17}z^{19},\\x^{19}y^{16}z^{16},x^{16}y^{19}z^{16},x^{16}y^{16}z^{19},x^{18}y^{18}z^{15},x^{18}y^{15}z^{18},x^{15}y^{18}z^{18},\\ x^{18}y^{17}z^{16},x^{18}y^{16}z^{17},x^{17}y^{18}z^{16},x^{17}y^{16}z^{18},x^{16}y^{18}z^{17},x^{16}y^{17}z^{18},\\x^{17}y^{17}z^{17}\rangle.
\end{multline*} 

\end{enumerate}
We see that $|G(I)|=43$, $|G(I^2)|=18$, $|G(I^3)|=34$, as desired.
\end{example}
\section{Improved conditions for tiny squares}
Let $I\subset \mathbb{K}[x,y]$ be a monomial ideal and let $G(I)$ denote the minimal monomial generating set of $I$. Then $G(I)=\{u_1,\ldots,u_m\}$, where $u_i=x^{a_i}y^{b_i}$ for all $i$ and where the exponents $a_i,b_i\in \mathbb{N}$ satisfy $a_1>a_2>\ldots>a_m$ and $b_1<b_2<\ldots<b_m$.

Let $V:=\{(i,j)\in \mathbb{N}^2:1\le i\le j\le m\}$ and consider the map
\begin{align*}
f: V &\rightarrow I^2\\
(i,j)&\mapsto u_iu_j.
\end{align*}
Then $f(V)$ generates $I^2$, but it is not a minimal generating set in general.
\begin{example}
Let $I=\langle x^4,x^3y^2,y^3\rangle$. Then $G(I^2)=\{x^8,x^7y^2,x^4y^3,x^3y^5,y^6\}$. Note that $u_2^2\not\in G(I^2)$ since $u_1u_3|u_2^2$. The picture below represents $V$; $(i,j)$ is marked with a star if $f(i,j)\in G(I^2)$ and $(i,j)$ is marked with a usual dot if $f(i,j)\not\in G(I^2)$ and the arrows show which monomials divide which. If several monomials are equal, we can choose one that will be marked with a star and mark the others with dots.
\begin{figure}[H]
\centering
\begin{tikzpicture}
\draw[very thick,->] (0,0) -- (3.5,0);
\draw[very thick,->] (0,0) -- (0,3.5);
 \foreach \x in {1,...,3}{
    \node [below, very thin] at (\x,0) {$_\x$};
    \node [left, very thin] at (0,\x) {$_\x$};
    \node at (1,\x) {$\ast$};
    \node at (\x,3) {$\ast$};
    }
 \fill (2,2) circle (0.07 cm);   
 \draw[very thick,->] (1.05,2.95) -- (1.95,2.05);
\end{tikzpicture}
\caption{generators of $I^2$.}
\end{figure}
\end{example}

\begin{theorem}{(Theorem~3.1 from \cite{TS})}
Let $m\ge 5$ and let $I=\langle u_1,\ldots,u_m\rangle$ be an ideal with $u_i=x^{a_i}y^{b_i}$ for all $i$, where $a_1>\ldots>a_m$ and $b_1<\ldots<b_m$. Assume that the following divisibility conditions hold:

\begin{align}
u_1u_m &|u_2u_{m-1}               \tag{2}\label{2}\\
u_1u_{m-1}&|u_2u_3      \tag{3.1}\label{3.1}\\
u_1u_{m-1}&|u_{m-2}^2      \tag{3.2}\label{3.2}\\
u_2^2 &|u_1u_3         \tag{4.1}\label{4.1}\\
u_2^2 &|u_1u_{m-2}         \tag{4.2}\label{4.2}\\
u_2u_m &|u_3u_{m-1}\tag{5.1}\label{5.1}\\
u_2u_m &|u_{m-2}u_{m-1}\tag{5.2}\label{5.2}\\
u_{m-1}^2 &|u_3u_m    \tag{6.1}\label{6.1}\\
u_{m-1}^2 &|u_{m-2}u_m.    \tag{6.2}\label{6.2}
\end{align}
Then $G(I^2)=\{u_1^2,u_1u_2,u_2^2\}\cup\{u_1u_{m-1},u_1u_m,u_2u_m\}\cup\{u_{m-1}^2,u_{m-1}u_m,u_m^2\}.$
\end{theorem}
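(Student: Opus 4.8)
The claim is that under the nine divisibility hypotheses, $G(I^2)$ is exactly the nine listed monomials. The plan is to show that every product $u_iu_j$ with $(i,j)\in V$ is divisible by one of the nine candidate generators, and then to check that none of the nine candidates is divisible by another. The second part is comparatively routine: the candidates split into the three ``corners'' $\{u_1^2,u_1u_2,u_2^2\}$ (small $y$-degree), $\{u_{m-1}^2,u_{m-1}u_m,u_m^2\}$ (small $x$-degree), and the ``middle'' $\{u_1u_{m-1},u_1u_m,u_2u_m\}$, and one compares exponents using the monotonicity $a_1>\ldots>a_m$, $b_1<\ldots<b_m$ together with a couple of the hypotheses (for instance \eqref{2} is what rules out $u_1u_m$ being redundant against $u_2u_{m-1}$, which is not on the list anyway); I expect this bookkeeping to go through without surprises, so I would state it briefly and move on.

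The heart of the argument is the first part: given arbitrary $1\le i\le j\le m$, exhibit a divisor of $u_iu_j$ among the nine. The natural way to organize this is by how close $i$ and $j$ are to the ends $1$ and $m$. First I would record the ``monotone reduction'' principle: since the $a_i$ are strictly decreasing and the $b_i$ strictly increasing, for any $i\le i'\le j'\le j$ we do \emph{not} in general get $u_{i'}u_{j'}\mid u_iu_j$, so one cannot simply collapse everything to the extremes — this is exactly why the explicit hypotheses are needed. Instead I would argue in stages. Using \eqref{4.1} ($u_2^2\mid u_1u_3$) repeatedly one shows $u_2^2\mid u_1u_k$ for all $k\ge 3$ up to the point where $u_2^2$ stops dividing, i.e. combined with \eqref{4.2} one gets $u_2^2\mid u_1u_k$ for $3\le k\le m-2$; symmetrically \eqref{6.1} and \eqref{6.2} give $u_{m-1}^2\mid u_ku_m$ for $3\le k\le m-2$. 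Similarly \eqref{3.1}, \eqref{3.2} give $u_1u_{m-1}\mid u_ju_k$ for a range of middle indices, and \eqref{5.1}, \eqref{5.2} do the symmetric job with $u_2u_m$. The point of each hypothesis is to ``anchor'' one of the nine generators as a divisor of a whole block of products $u_iu_j$; the monotonicity then propagates the divisibility across the block once it is known at the two ends of the block.

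Concretely I would split $V$ into cases according to $\min(i, m{-}1{-}j{+}1)$-type conditions. If $i=1$: then $u_1u_j$ for $j\le 2$ is already a candidate; for $3\le j\le m-2$ it is divisible by $u_2^2$ (by the extension of \eqref{4.1}); for $j\in\{m-1,m\}$ it is a candidate. If $i=2$: $u_2u_j$ for $j\le 2$ is a candidate; for $3\le j\le m-2$ one needs a divisor — here is where \eqref{3.1} (giving $u_1u_{m-1}\mid u_2u_3$) and \eqref{5.2} (giving $u_2u_m\mid u_{m-2}u_{m-1}$), extended by monotonicity, cover the middle block, meeting in the center; for $j\in\{m-1,m\}$, $u_2u_{m-1}$ is divisible by $u_1u_m$ via \eqref{2} (or $u_2u_m$ is a candidate). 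The remaining case $3\le i\le j\le m-2$ is the one to watch: I would handle it by noting $u_iu_j$ is divisible by $u_1u_{m-1}$ (push $i$ down to $1$ costs $x$-degree but we are buying it back because $u_1u_{m-1}\mid u_{m-2}^2$ by \eqref{3.2} and monotonicity spreads it) or symmetrically by $u_2u_m$ via \eqref{5.1}, \eqref{5.2}. The main obstacle is precisely making these ``monotonicity spreads an anchored divisibility across a block'' steps airtight — one must verify that at each step the exponent inequalities actually point the right way, and that the blocks covered by the various anchors genuinely exhaust $\{3,\ldots,m-2\}$ (which is where $m\ge 5$ enters, ensuring the middle block is small enough that two anchors from opposite ends suffice). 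I would therefore prove one clean ``spreading lemma'' first — if $v\mid u_iu_k$ and $v\mid u_iu_{k'}$ with $k\le k'$ then $v\mid u_iu_\ell$ for all $k\le\ell\le k'$ (and the symmetric statement in the first index) — and then the theorem becomes a finite, transparent case check against the nine hypotheses.
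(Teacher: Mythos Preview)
Your overall plan---prove a spreading lemma, then run a finite case check showing each $u_iu_j$ is divisible by one of the nine candidates---is exactly the paper's approach, and your spreading lemma is the paper's Lemma~\ref{lemma}. But you state it only in a one-index form (fix $i$, vary $j$); the paper's version lets \emph{both} coordinates vary between $v_1$ and $v_2$, and you need that two-dimensional form to handle the interior block $3\le i\le j\le m-2$ in one shot.

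The concrete gap is in how you anchor the blocks. For the row $i=2$, $3\le j\le m-2$, you invoke \eqref{3.1} for $u_1u_{m-1}\mid u_2u_3$ and then \eqref{5.2} for $u_2u_m\mid u_{m-2}u_{m-1}$; but the second concerns the pair $(m-2,m-1)$, not any $u_2u_j$, and uses a \emph{different} divisor, so there is no ``meeting in the center'': the spreading lemma requires the same divisor at both ends. None of the nine hypotheses directly supplies the missing anchor $u_1u_{m-1}\mid u_2u_{m-2}$; the paper manufactures it by \emph{multiplying} hypotheses (e.g.\ \eqref{2}$\cdot$\eqref{6.2} gives exactly $u_1u_{m-1}\mid u_2u_{m-2}$). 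Your interior case has the same defect: \eqref{5.1} and \eqref{5.2} anchor the row $j=m-1$, not the region $3\le i\le j\le m-2$, and \eqref{3.2} alone gives only one corner. The idea your sketch is missing is that a single hypothesis rarely anchors a whole block; one must either multiply conditions to produce a matching second anchor with the same divisor (the paper derives $u_2u_{m-1}\mid u_3^2$ and $u_2u_{m-1}\mid u_{m-2}^2$, spreads, then applies \eqref{2} to land on $u_1u_m$), or use the two-dimensional lemma with \eqref{3.1} at $(2,3)$ and \eqref{3.2} at $(m-2,m-2)$, which then covers the $i=2$ row and the interior simultaneously.
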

Let us look closer at the conditions above. 

If we multiply conditions~\eqref{2} and \eqref{4.2}, we will get 
$u_1u_m\cdot u_2^2|u_2u_{m-1}\cdot u_1u_{m-2}$, that is, $u_2u_m|u_{m-2}u_{m-1}$, which is exactly condition~\eqref{5.2}.

If we multiply conditions~\eqref{2} and \eqref{4.1}, we will get 
$u_1u_m \cdot u_2^2|u_2u_{m-1} \cdot u_1u_3$, that is, $u_2u_m|u_3u_{m-1}$, which is exactly condition~\eqref{5.1}.

If we multiply conditions~\eqref{2} and \eqref{6.1}, we will get 
$u_1u_m \cdot u_{m-1}^2|u_2u_{m-1} \cdot u_3u_m$, that is, $u_1u_{m-1}|u_2u_3$, which is exactly condition~\eqref{3.1}.

In other words, conditions~\eqref{5.1}, \eqref{5.2} and \eqref{3.1} follow from the other conditions and are redundant.
We are now left with conditions~\eqref{2}, \eqref{3.2}, \eqref{4.1}, \eqref{4.2}, \eqref{6.1} and \eqref{6.2}. This set of conditions has a nice property of being "almost self-dual" in the following sense. Recall that $V=\{(i,j)\in \mathbb{N}^2:1\le i\le j\le m\}$. This set of points is symmetric with respect to the line $i+j=m+1$. If $(i,j)\in V$, then $(m+1-j,m+1-i)\in V$ is obtained by reflecting $(i,j)$ about the line $i+j=m+1$. If $u_iu_j\in f(V)$, then $u_{m+1-j}u_{m+1-i}\in f(V)$ will be called  its \textbf{dual}. If $i+j=m+1$, the corresponding monomial will be  called \textbf{self-dual}.

Consider condition~\eqref{2}. If we dualize all monomials in this condition, we will get it back.

Consider condition~\eqref{4.1}. If we dualize all monomials in this condition, we will get condition~\eqref{6.2}.

Consider condition~\eqref{4.2}. If we dualize all monomials in this condition, we will get condition~\eqref{6.1}.

The only condition that has no dual is condition~\eqref{3.2}. Intuitively, we would like to use a self-dual set of conditions, that is, a set of conditions such that if we dualize every condition, we get the same set of conditions. Our set of conditions is not self-dual. At the first glance, it can be resolved in several ways:
\begin{enumerate}
\item It could be the case that condition~\eqref{3.2} follows from other conditions. However, this is not the case, as Remark~\ref{counter-ex} shows.
\item We can add the dual of condition~\eqref{3.2} to our set. However, this seems unnatural, given that the theorem holds without adding any other conditions.
\item We can remove condition~\eqref{3.2} and try to prove  the theorem without it. This is exactly what we will do in Theorem~\ref{newts}. We will use the following relabelling of conditions: $\eqref{2}\rightarrow \eqref{A}$, $\eqref{4.1}\rightarrow\eqref{B}$, $\eqref{6.2}\rightarrow \eqref{B*}$, $\eqref{4.2}\rightarrow\eqref{C}$, $\eqref{6.1}\rightarrow\eqref{C*}$. But before proving Theorem~\ref{newts}, we need a preliminary lemma.
\end{enumerate}
\begin{lemma}{(Lemma~2.3 from \cite{TS})}
\label{lemma}
Let $v,v_1,v_2\in V$. Assume that $v_1\le v_2$ and that $f(v)$ divides both $f(v_1),f(v_2)$. Then $f(v)$ divides $f(v')$ for all $v'\in V$ such that $v_1\le v'\le v_2$.
\end{lemma}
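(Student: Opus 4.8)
The plan is to translate divisibility into inequalities between exponent sums and then exploit the two built-in monotonicities coming from $a_1>\ldots>a_m$ and $b_1<\ldots<b_m$. Write $u_i=x^{a_i}y^{b_i}$, and for a point $w=(k,l)\in V$ set $\alpha(w):=a_k+a_l$ and $\beta(w):=b_k+b_l$, so that $f(w)=x^{\alpha(w)}y^{\beta(w)}$. Then for $w,w'\in V$ the monomial $f(w)$ divides $f(w')$ if and only if $\alpha(w)\le\alpha(w')$ and $\beta(w)\le\beta(w')$. Here $\le$ on $V\subset\mathbb N^2$ is the componentwise order, as in the intervals appearing in the figures.

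First I would record the monotonicity of $\alpha$ and $\beta$ along this order. If $w\le w'$, say $w=(k,l)$ and $w'=(k',l')$ with $k\le k'$ and $l\le l'$, then $a_k\ge a_{k'}$ and $a_l\ge a_{l'}$ (the $a_i$ are strictly decreasing), hence $\alpha(w)\ge\alpha(w')$; dually $b_k\le b_{k'}$ and $b_l\le b_{l'}$, hence $\beta(w)\le\beta(w')$. Thus $\alpha$ is order-reversing and $\beta$ is order-preserving on $V$.

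Now let $v'\in V$ satisfy $v_1\le v'\le v_2$. From $f(v)\mid f(v_2)$ we get $\alpha(v)\le\alpha(v_2)$, and from $v'\le v_2$ together with $\alpha$ being order-reversing we get $\alpha(v_2)\le\alpha(v')$; hence $\alpha(v)\le\alpha(v')$. Symmetrically, $f(v)\mid f(v_1)$ gives $\beta(v)\le\beta(v_1)$, and $v_1\le v'$ together with $\beta$ being order-preserving gives $\beta(v_1)\le\beta(v')$; hence $\beta(v)\le\beta(v')$. Combining the two inequalities yields $f(v)\mid f(v')$, which is the claim.

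There is no genuine obstacle here; the only point needing care is keeping track of the two opposite monotonicities — the upper endpoint $v_2$ is what controls the $x$-exponent while the lower endpoint $v_1$ is what controls the $y$-exponent — and, as a side remark, noticing that only the $x$-part of the hypothesis $f(v)\mid f(v_2)$ and the $y$-part of $f(v)\mid f(v_1)$ are actually used.
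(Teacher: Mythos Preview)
Your argument is correct: translating divisibility into the pair of inequalities $\alpha(v)\le\alpha(v')$ and $\beta(v)\le\beta(v')$, and then using that $\alpha$ is order-reversing while $\beta$ is order-preserving on $(V,\le)$, is exactly the right mechanism, and your chain of inequalities is clean. The paper itself does not supply a proof of this lemma --- it is quoted from \cite{TS} --- so there is no in-paper argument to compare against; your proof is the natural one and almost certainly matches what is in the cited source.
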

\begin{theorem}{(Improved conditions for tiny squares)}
\label{newts}
Let $m\ge 5$ and let $I=\langle u_1,\ldots,u_m\rangle$ be an ideal with $u_i=x^{a_i}y^{b_i}$ for all $i$, where $a_1>\ldots>a_m$ and $b_1<\ldots<b_m$. Assume that the following divisibility conditions hold:
\begin{align}
u_1u_m &|u_2u_{m-1}     \tag{A} \label{A}\\
u_2^2 &|u_1u_3          \tag{B} \label{B}\\
u_{m-1}^2 &|u_{m-2}u_m  \tag{B*}\label{B*}\\
u_2^2 &|u_1u_{m-2}      \tag{C} \label{C}\\
u_{m-1}^2 &|u_3u_m.     \tag{C*}\label{C*}
\end{align}
Then $G(I^2)=\{u_1^2,u_1u_2,u_2^2\}\cup\{u_1u_{m-1},u_1u_m,u_2u_m\}\cup\{u_{m-1}^2,u_{m-1}u_m,u_m^2\}.$
\end{theorem}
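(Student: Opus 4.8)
The plan is to show that the nine listed monomials are exactly the minimal generators of $I^2$, by arguing that every $u_iu_j$ with $(i,j)\in V$ is divisible by one of these nine, and that none of the nine divides another. Since $f(V)$ generates $I^2$, the first claim shows $I^2=\langle u_1^2,u_1u_2,u_2^2,u_1u_{m-1},u_1u_m,u_2u_m,u_{m-1}^2,u_{m-1}u_m,u_m^2\rangle$, and the second claim shows this list is minimal (with the caveat, as in the earlier example, that if some of these nine happen to coincide we simply keep one representative; but the divisibility relations among $a_i,b_i$ force these nine to be pairwise non-dividing as soon as $m\ge 5$, which I would check directly from $a_1>\cdots>a_m$, $b_1<\cdots<b_m$). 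The heart of the proof is therefore the covering statement: every $f(v)$, $v\in V$, is a multiple of one of $f(1,1),f(1,2),f(2,2),f(1,m-1),f(1,m),f(2,m),f(m-1,m-1),f(m-1,m),f(m,m)$.

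To organize the covering argument I would exploit Lemma~\ref{lemma}: if $f(v)$ divides $f(v_1)$ and $f(v_2)$ with $v_1\le v_2$ in the coordinatewise order on $V$, then $f(v)$ divides $f(v')$ for all $v'$ between them. Concretely, I would first use condition~\eqref{B} (i.e. $u_2^2\mid u_1u_3$) together with Lemma~\ref{lemma} to show $u_2^2$ divides $u_2u_j$ for $2\le j\le 3$ — trivial — and then bootstrap: from \eqref{B} we get $u_2^2\mid u_1u_3$, and I want to propagate this to $u_2^2\mid u_1u_j$ for all $3\le j\le m-2$ using condition~\eqref{C} ($u_2^2\mid u_1u_{m-2}$) as the far endpoint and Lemma~\ref{lemma} applied to the segment from $(1,3)$ to $(1,m-2)$. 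That covers the whole block $\{(i,j):2\le i\le j\le m-2\}$ by $u_2^2=f(2,2)$ (since $u_2^2\mid u_1u_j$ and $u_1u_j\mid u_iu_j$ once $i\ge 2$ — wait, rather $u_2^2\mid u_1u_j\le u_iu_j$ for $i\le 2$; for $i\ge 2$ one uses $u_2^2\mid u_2u_i$ trivially and then Lemma~\ref{lemma} again along rows). Dually, condition~\eqref{B*} together with \eqref{C*} and Lemma~\ref{lemma} shows $u_{m-1}^2=f(m-1,m-1)$ divides $u_iu_j$ for all $3\le i\le j\le m-1$, covering the symmetric block. Condition~\eqref{A} ($u_1u_m\mid u_2u_{m-1}$) together with Lemma~\ref{lemma} then handles the "cross" region: $u_1u_m$ divides $f(i,j)$ for all $(i,j)$ in the rectangle $2\le i\le m-1$, $2\le j\le m-1$ with $i\le j$, because $f(1,m)$ divides both $f(2,2)$? — no; rather $f(1,m)\mid f(2,m-1)$ by \eqref{A}, and $f(2,m-1)$ sits at a corner of that rectangle, so Lemma~\ref{lemma} fans it out. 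Finally the border cases — $i=1$ or $j=m$ — are absorbed by the explicit generators $u_1^2,u_1u_2,u_1u_{m-1},u_1u_m,u_2u_m,u_{m-1}u_m,u_m^2$ essentially by definition (every $u_1u_j$ for $j\ge m-1$ is literally in the list or is $\ge$ one that is, and similarly on the $u_m$ side), after which one checks the three "transition" entries $f(1,3),\dots,f(1,m-2)$ and $f(3,m),\dots,f(m-2,m)$ are covered by $u_2^2$ and $u_{m-1}^2$ respectively via the propagation above.

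The main obstacle I anticipate is the careful bookkeeping in the middle: making the two monotone propagations (the $u_2^2$-block growing rightward/downward from $(1,3)$, and the $u_{m-1}^2$-block growing leftward/upward from $(m-2,m)$) actually meet and, together with the $u_1u_m$-rectangle, leave no gap. The delicate point is the overlap/interface near the antidiagonal $i+j=m+1$ and near small $m$ (e.g. $m=5$, where $m-2=3$ and several of these regions degenerate to single points), so I would treat $m=5$ and perhaps $m=6$ by hand and then run the general argument for $m\ge 7$. A secondary obstacle is making precise, via Lemma~\ref{lemma}, the step "divides at two endpoints $\Rightarrow$ divides on the whole row/column/rectangle": Lemma~\ref{lemma} is stated for a segment $v_1\le v'\le v_2$, and a rectangle is a union of such segments, so one applies it iteratively — first along rows using the two vertical sides as endpoints, or first along the diagonal — but one must ensure the endpoints used are themselves already known to be divisible, which is why the order of covering (blocks first, then cross, then border) matters. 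Once the covering is established, verifying that none of the nine generators divides another is a short finite check using the strict monotonicity of the $a_i$ and $b_i$ and $m\ge 5$, so I expect no difficulty there.
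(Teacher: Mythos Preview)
Your covering argument for the interior has a genuine gap. The claim that $u_2^2 \mid u_2u_i$ ``trivially'' for $i\ge 2$ is false: $u_2u_i=x^{a_2+a_i}y^{b_2+b_i}$ has $a_2+a_i<2a_2$ whenever $i>2$, so $u_2^2$ does \emph{not} divide $u_2u_i$ for $i>2$. Hence conditions \eqref{B} and \eqref{C} together with Lemma~\ref{lemma} give you only the first column $u_2^2\mid u_1u_j$ for $3\le j\le m-2$ (and dually $u_{m-1}^2\mid u_iu_m$ for $3\le i\le m-2$), not the block $2\le i\le j\le m-2$. Likewise, from \eqref{A} alone you know $u_1u_m\mid u_2u_{m-1}$ at a \emph{single} point of $V$, and Lemma~\ref{lemma} requires two comparable endpoints to propagate; there is no way to ``fan out'' $u_1u_m$ over a rectangle from one point. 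So as written your three blocks never reach the region $3\le i\le j\le m-2$ (nor the second column $i=2$, $3\le j\le m-2$, nor its dual row).

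The paper fills this gap with a device you are missing: one \emph{multiplies} the given divisibility conditions to manufacture new ones. Multiplying \eqref{A} with \eqref{B*} yields $u_1u_{m-1}\mid u_2u_{m-2}$, and \eqref{A} with \eqref{C*} yields $u_1u_{m-1}\mid u_2u_3$; now Lemma~\ref{lemma} covers the second column $(2,j)$, $3\le j\le m-2$, by $u_1u_{m-1}$ (and dually the row $j=m-1$ by $u_2u_m$). For the deep interior one multiplies three conditions: \eqref{A}\eqref{B}\eqref{C*} gives $u_2u_{m-1}\mid u_3^2$, and \eqref{A}\eqref{B*}\eqref{C} gives $u_2u_{m-1}\mid u_{m-2}^2$; Lemma~\ref{lemma} then covers all of $(3,3)\le(i,j)\le(m-2,m-2)$ by $u_2u_{m-1}$, which is itself a multiple of $u_1u_m$ by \eqref{A}. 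This multiplication trick is the missing idea, and without it the outlined propagation cannot be completed. (The paper also does not verify minimality of the nine monomials itself; it cites the original source for $|G(I^2)|\ge 9$.)
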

\begin{proof}
In order to see that these monomials generate $I^2$, it is enough to show that each monomial in $f(V)$ is divisible by one of these nine monomials. We distinguish several cases. \Cref{proof} shows which monomials are covered by which cases.
\begin{figure}[H]
\centering
\begin{tikzpicture}

\draw[very thick,->] (0,0) -- (8,0);
\draw[very thick,->] (0,0) -- (0,8);
\foreach \x in {1,...,3}{
    \node [below, very thin] at (\x,0) {$_\x$};
    \node [left, very thin] at (0,\x) {$_\x$};
  }
   
\node [below, very thin] at (7,0) {$_m$};
\node [below, very thin] at (6,0) {$_{m-1}$};
\node [below, very thin] at (5,0) {$_{m-2}$};
\node [below, very thin] at (4,0) {$\cdots$};
\node [left, very thin] at (0,4.1) {$_{\vdots}$};
\node [left, very thin] at (0,5) {$_{m-2}$};
\node [left, very thin] at (0,6) {$_{m-1}$};
\node [left, very thin] at (0,7) {$_{m}$};   
\node at (1,1) {$\ast$};
\node at (1,2) {$\ast$};
\node at (2,2) {$\ast$};
\node at (1,6) {$\ast$};
\node at (1,7) {$\ast$};
\node at (2,7) {$\ast$};
\node at (6,6) {$\ast$};
\node at (6,7) {$\ast$};
\node at (7,7) {$\ast$};

\draw [red] (1,4) ellipse (0.4 cm and 1.5 cm);
\draw [red] (4,7) ellipse (1.5 cm and 0.4 cm);
\draw [green] (2,4) ellipse (0.4 cm and 1.5 cm);
\draw [green] (4,6) ellipse (1.5 cm and 0.4 cm);
\fill[orange] (2,6) circle (0.08 cm);
\foreach \x in {3,...,5}{
\fill[red] (1,\x) circle (0.08 cm);
\fill[red] (\x,7) circle (0.08 cm);
\fill[green] (2,\x) circle (0.08 cm);
\fill[green] (\x,6) circle (0.08 cm);
\fill[blue] (\x,\x) circle (0.08 cm);
\fill[blue] (3,\x) circle (0.08 cm);
\fill[blue] (\x,5) circle (0.08 cm);
}
\draw[blue] (2.5,2.5)--(2.5,5.5)--(5.5,5.5)--(5.5,4.5)--(4.5,4.5)--(4.5,3.5)--(3.5,3.5)--(3.5,2.5)--(2.5,2.5);
\draw[very thick,->] (1.05,6.95) -- (1.95,6.05);
\draw[very thick,->] (2.05,5.95)--(2.5,5.5);
\draw[very thick,->] (1.05,5.95)--(1.7,5);
\draw[very thick,->] (2.05,6.95)--(3,6.3);
\draw[very thick,->] (1.95,2.05)--(1.3,3);
\draw[very thick,->] (5.95,6.05)--(5,6.7);
\end{tikzpicture}
\caption{illustration of the proof of \Cref{newts}.}
\label{proof}
\end{figure}

\textcolor{orange}{Case 0} (self-dual): $i=2,j=m-1$. We are done by condition~\eqref{A}.

\textcolor{red}{Case 1}: $(1,3)\le(1,j)\le(1,m-2)$. Conditions~\eqref{B} and~\eqref{C}, together with Lemma~\ref{lemma}, imply $u_2^2|u_1u_j$ for all $3\le j\le m-2$.

\textcolor{red}{Case 1*} (dual to Case 1): $(3,m)\le(i,m)\le(m-2,m)$. By the dual argument (that is, using conditions~\eqref{B*} and~\eqref{C*}) and Lemma~\ref{lemma} we conclude that $u_{m-1}^2|u_iu_m$ for all $3\le i\le m-2$.

\textcolor{green}{Case 2}: $(2,3)\le(2,j)\le(2,m-2)$.
If we multiply conditions~\eqref{A} and~\eqref{B*}, we obtain
$$u_1u_m\cdot u_{m-1}^2|u_2u_{m-1}\cdot u_{m-2}u_m\Leftrightarrow u_1u_{m-1}|u_2u_{m-2}.$$
If we multiply conditions~\eqref{A} and~\eqref{C*}, we obtain
$$u_1u_m\cdot u_{m-1}^2|u_2u_{m-1}\cdot u_3u_m\Leftrightarrow u_1u_{m-1}|u_2u_3.$$
Combining these two divisibility conditions with Lemma~\ref{lemma}, we conclude that $u_1u_{m-1}|u_2u_j$ for all $3\le j\le m-2$.

\textcolor{green}{Case 2*}(dual to case 2): $(3,m-1)\le(i,m-1)\le(m-2,m-1)$. We apply arguments dual to those from Case 2, that is, we multiply conditions~\eqref{A} and~\eqref{B} and we multiply conditions~\eqref{A} and~\eqref{C}. Combining these two divisibility conditions with Lemma~\ref{lemma}, we conclude that $u_2u_m|u_iu_{m-1}$ for all $3\le i\le m-2$.

\textcolor{blue}{Case 3} (self-dual): $(3,3)\le (i,j)\le(m-2,m-2)$. If we multiply conditions~\eqref{A}, \eqref{B} and \eqref{C*}, we obtain
$$u_1u_m\cdot u_2^2\cdot u_{m-1}^2|u_2u_{m-1}\cdot u_1u_3\cdot u_3u_m\Leftrightarrow u_2u_{m-1}|u_3^2.$$
Dually, if we multiply conditions~\eqref{A}, \eqref{B*} and \eqref{C}, we obtain $u_2u_{m-1}|u_{m-2}^2$. Combining these two divisibility conditions with Lemma~\ref{lemma}, we conclude that $u_2u_{m-1}|u_iu_j$ for all $(3,3)\le (i,j)\le(m-2,m-2)$. 

So far we know that $|G(I^2)|\le 9$. The proof of $|G(I^2)|\ge 9$ can be found in \cite{TS}. In any case, we are not so interested in proving that $|G(I^2)|\ge 9$, the essential point here is $|G(I^2)|\le 9$.
\end{proof}

\begin{example}{(a three-parameter family of ideals with tiny squares)}
\label{family}
Let $l, k, t$ be positive integers such that $k\ge 4t$. Let $I=\langle u_1,\ldots,u_{tl+4}\rangle$, where $u_i=x^{a_i}y^{b_i}$ with $(a_1, \ldots a_{tl+4})=(b_{tl+4},\ldots,b_1)=(kl, (k-t)l, (k-t)l-1,\ldots,(k-2t)l, tl, 0)$. Clearly, $kl>(k-t)l>(k-t)l-1$ and $(k-2t)l>tl>0$ under the condition that $k\ge 4t$. Also, $(k-t)l-1\ge(k-2t)l\Leftrightarrow tl\ge 1$, that is, it is indeed a decreasing sequence ($u_{tl+2}$ is $u_3$ if $t=l=1$, but there is no problem).
We check the conditions for tiny squares:

Condition~\eqref{A} holds trivially.

Condition~\eqref{B}: $u_2^2|u_1u_3 \Leftrightarrow x^{2(k-t)l}y^{2tl}|x^{kl}\cdot x^{(k-t)l-1}y^{(k-2t)l}\Leftrightarrow 2(k-t)l\le(2k-t)l-1 \text{ and } 2tl\le(k-2t)l\Leftrightarrow tl\ge 1 \text { and } k\ge 4t$.

Condition~\eqref{B*} holds by the symmetry of our ideal.

Condition~\eqref{C}: $u_2^2|u_1u_{m-2} \Leftrightarrow x^{2(k-t)l}y^{2tl}|x^{kl}\cdot x^{(k-2t)l}y^{(k-t)l-1}\Leftrightarrow 2(k-t)l\le 2(k-t)l \text{ and } 2tl\le (k-t)l-1\Leftrightarrow (k-3t)l\ge 1$.

Condition~\eqref{C*} holds by the symmetry of our ideal.
 
\end{example}
\begin{remark}
\label{counter-ex}
Consider Example~\ref{family} again. Put $l=1$ and $k=4t$, then $I=\langle u_1,\ldots,u_{t+4}\rangle$, where $u_i=x^{a_i}y^{b_i}$ with $(a_1, \ldots a_{t+4})=(b_{t+4},\ldots,b_1)=(4t, 3t, 3t-1,...,2t, t, 0)$. More explicitly, $$I=\langle x^{4t},x^{3t}y^{t},x^{3t-1}y^{2t},\ldots,x^{2t}y^{3t-1},x^{t}y^{3t},y^{4t}\rangle.$$ 
First of all note that this ideal does not satisfy condition~\eqref{3.2} which requires $u_1u_{m-1}|u_{m-2}^2$. Also note that this is exactly the ideal obtained using the construction, described in Section~2 for $n=2$: the first two and the last two monomials generate the skeleton $J$ of $I$. All other monomials are those on the central integer cross-section of the hypercube $[2t,3t-1]^2$, which is simply a diagonal in this case. If we put $t=22$, we will recover the ideal from Example~\ref{ex}.
\end{remark}

\bibliographystyle{siam}
\bibliography{tiny_squares}
\end{document}